\newtheorem{theorem}{Theorem}[section]
\newtheorem{corollary}[theorem]{Corollary}
\newtheorem{proposition}[theorem]{Proposition}
\newtheorem{lemma}[theorem]{Lemma}
\theoremstyle{definition}
\newtheorem{definition}[theorem]{Definition}
\let\tensor=\otimes
\let\nbd=\nobreakdash
\let\iso=\cong
\newcommand{\id}{\mathrm{id}}
\newcommand{\bZ}{\ensuremath{\mathbb{Z}}}
\newcommand{\Ch}{\ensuremath{\mathrm{Ch}}}
\newcommand{\Chb}{\ensuremath{\mathrm{Ch}^\mathrm{b}}}
\newcommand{\Tot}{\ensuremath{\mathop{\mathrm{Tot}}}}
\newcommand{\coker}{\ensuremath{\mathrm{coker}\,}}
\newcommand{\resp}{{\it resp.}}
\newcommand{\ie}{{\it i.e.}}
\begin{document}

\title{Finite domination and Novikov rings. Iterative approach}

\date{08.09.2011, minor corrections 12.10.2011}

\author{Thomas H\"uttemann}

\address{Thomas H\"uttemann\\ Queen's University Belfast\\ School of
  Mathematics and Physics\\ Pure Mathematics Research Centre\\ Belfast
  BT7~1NN\\ Northern Ireland\\ UK}

\email{t.huettemann@qub.ac.uk}

\urladdr{http://huettemann.zzl.org/}

\author{David Quinn}

\address{David Quinn\\ Queen's University Belfast\\ School of
  Mathematics and Physics\\ Pure Mathematics Research Centre\\ Belfast
  BT7~1NN\\ Northern Ireland\\ UK}

\email{david.quinn@qub.ac.uk}

\subjclass[2000]{Primary 55U15; Secondary 18G35}

\thanks{This work was supported by the Engineering and Physical
  Sciences Research Council [grant number EP/H018743/1].}

\begin{abstract}
  Suppose $C$ is a bounded chain complex of finitely generated free
  modules over the \textsc{Laurent} polynomial ring $L =
  R[x,x^{-1}]$. Then $C$ is $R$\nbd-finitely dominated, \ie, homotopy
  equivalent over~$R$ to a bounded chain complex of finitely generated
  projective $R$\nbd-modules, if and only if the two chain complexes
  $C \tensor_L R((x))$ and $C \tensor_L R((x^{-1}))$ are acyclic, as
  has been proved by \textsc{Ranicki}. Here $R((x)) = R[[x]][x^{-1}]$
  and $R((x^{-1})) = R[[x^{-1}]][x]$ are rings of formal
  \textsc{Laurent} series, also known as \textsc{Novikov} rings. In
  this paper, we prove a generalisation of this criterion which allows
  us to detect finite domination of bounded below chain complexes of
  projective modules over \textsc{Laurent} rings in several
  indeterminates.
\end{abstract}

\maketitle

Finiteness conditions for chain complexes of modules play an important
role in both algebra and topology. For example, given a group~$G$ one
might ask whether the trivial $G$\nbd-module~$\bZ$ admits a resolution
by finitely generated projective $\bZ[G]$\nbd-modules; existence of
such resolutions is relevant for the study of group homology of~$G$,
and has applications in the theory of duality groups \cite{Brown}. For
topologists, finite domination of chain complexes is related, among
other things, to questions about finiteness of $CW$ complexes, the
topology of ends of manifolds, and obstructions for the existence of
non-singular closed $1$\nbd-forms \cite{Ranicki-findom, Schuetz}.

A chain complex~$C$ of $R[z,z^{-1}]$\nbd-modules is called {\it
  finitely dominated} if it is homotopy equivalent, as a complex of
$R$\nbd-modules, to a bounded complex of finitely generated projective
$R$\nbd-modules. Finite domination of~$C$ can be characterised in
various ways; \textsc{Brown} considered compatibility of the functors
$M \mapsto H_* (C;M)$ and $M \mapsto H^* (C;M)$ with products and
direct limits, respectively \cite[Theorem~1]{Brown}, while
\textsc{Ranicki} showed that $C$ is finitely dominated if and only if
the \textsc{Novikov} homology of~$C$ is trivial (see
\cite[Theorem~2]{Ranicki-findom}, and Theorem~\ref{thm:ranicki}
below).

In this paper we consider finite domination of chain complexes over a
\textsc{Laurent} polynomial ring~$L$ with several indeterminates. In
Theorem~\ref{thm:main} we give a complete characterisation of finitely
dominated chain complexes in terms of their \textsc{Novikov} homology
over subrings of~$L$ generated by a subset of the indeterminates.

Related results have been discussed by \textsc{Sch\"utz}
\cite[\S4]{Schuetz}, but note that the criterion given there involves
infinitely many trivial \textsc{Novikov} homology modules, whereas our
result utilises \textsc{Novikov} homology with respect to finitely
many rings only.

In \S1 we introduce the notion of a finitely dominated chain complex,
and formulate our main result. In \S2 we review some constructions
from homological algebra and discuss the algebraic mapping torus of a
self map of a chain complex. Then Theorem~\ref{thm:main} is proved
in~\S3. We finish the paper by giving an explicit example of a
non-trivial finitely dominated chain complex in~\S4, and by discussing
finite domination over a field in~\S5.

\section{Finitely dominated chain complexes}

\label{sec:finit-domin-chain}

Let $A$~denote a ring with unit. We write $\Ch(A)$ for the category of
chain complexes of (right) $A$\nbd-modules, and $\Chb(A)$ for the full
subcategory of bounded chain complexes.

\begin{definition}
  Let $S$~be a subring of~$A$; every chain complex of $A$\nbd-modules
  is then, by restriction, also a chain complex of $S$\nbd-modules. We
  say that the chain complex $C \in \Ch(A)$ is
  \begin{enumerate}[{\rm (a)}]
  \item {\it $S$\nbd-finite} if it is bounded and consists of finitely
    generated free $S$-modules;
  \item {\it homotopy $S$\nbd-finite} if it is homotopy equivalent to
    an $S$\nbd-finite complex $D \in \Chb(S)$;
  \item {\it strict $S$\nbd-perfect} if it is bounded and consists of
    finitely generated projective $S$\nbd-modules;
  \item {\it $S$\nbd-finitely dominated} if it is homotopy equivalent
    to a strict $S$\nbd-perfect complex $D \in \Chb(S)$.
  \end{enumerate}
\end{definition}

\noindent Given an $S$\nbd-finitely dominated complex $C \in \Ch(A)$
there exists a strict $S$\nbd-perfect complex $D \in \Ch(S)$ homotopy
equivalent to~$C$. The {\it finiteness obstruction of~$C$} is defined
to be
\[\chi (C) = \sum_{j \in \bZ} (-1)^j [D_j] \in \tilde K_0 (S) \ ;\] it
is independent of the choice of~$D$. {\it The complex~$C$ is homotopy
  $S$\nbd-finite if and only if its finiteness obstruction is
  trivial\/}; see \cite[Theorem~1.7.12]{Rosenberg} for a textbook
proof. In this sense, algebraic $K$\nbd-theory detects homotopy
finiteness of finitely dominated chain complexes.

To find out whether a given complex $C \in \Ch(A)$ is homotopy
$S$\nbd-finite one should thus first determine whether it is
$S$\nbd-finitely dominated. In the special case $S=R$ and $A =
R[x,x^{-1}]$, \textsc{Ranicki} has given the following homological
characterisation:

\begin{theorem}[{\textsc{Ranicki} \cite[Theorem~2]{Ranicki-findom}}]
  \label{thm:ranicki}
  Let $C$ be a bounded chain complex of finitely generated free
  $R[x,x^{-1}]$\nbd-modules. The following conditions are equivalent:
  \begin{enumerate}[{\rm (a)}]
  \item \label{item:R_fd} The complex~$C$ is $R$\nbd-finitely dominated.
  \item \label{item:R_acyclic} Both the following chain complexes are acyclic:
    \[C \tensor_{R[x,x^{-1}]} R((x)) \quad \text{and} \quad C
    \tensor_{R[x,x^{-1}]} R((x^{-1})) \ .\]
  \end{enumerate}
\end{theorem}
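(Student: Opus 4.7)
The plan is to go through the algebraic mapping torus construction. Writing $L = R[x,x^{-1}]$, the key intermediate statement is that $C$ is $R$-finitely dominated if and only if $C$ is $L$-homotopy equivalent to the algebraic mapping torus $T(\phi)$ of some self-homotopy equivalence $\phi\colon D \to D$ of a strict $R$-perfect complex $D$. The easier direction of this reformulation is that a mapping torus is finitely dominated over $R$, since $T(\phi)$ is $R$-homotopy equivalent to $D$. The converse rests on transporting the multiplication-by-$x$ endomorphism of $C$ across an $R$-homotopy equivalence between $C$ and a strict $R$-perfect $D$ to obtain a self-equivalence $\phi$ of $D$, and then using the universal property of the mapping torus to build an $L$-linear map $T(\phi) \to C$ that is an $R$-equivalence and therefore an $L$-equivalence, both complexes being bounded and termwise projective over $L$.

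Granting this reformulation, the implication (a) $\Rightarrow$ (b) reduces to a direct computation. One uses the defining short exact sequence of $L$-modules
\[
  0 \longrightarrow D \tensor_R L \xrightarrow{\; 1 - x\phi \;} D \tensor_R L \longrightarrow T(\phi) \longrightarrow 0
\]
and checks that $1 - x\phi$ becomes a homotopy equivalence after base change to each \textsc{Novikov} ring. Over $R((x))$ the two-sided inverse is the geometric series $\sum_{n \geq 0} (x\phi)^n$, which is a well-defined power series in $x$ because only non-negative powers of $x$ occur and $D$ is bounded. Over $R((x^{-1}))$, one chooses a homotopy inverse $\psi$ for $\phi$ and writes $1 - x\phi \simeq -x\phi\,(1 - x^{-1}\psi)$, inverting the second factor by the series $\sum_{n \geq 0} (x^{-1}\psi)^n$, which lies in $R((x^{-1}))$.

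The real work lies in the converse (b) $\Rightarrow$ (a), where a strict $R$-perfect model for $C$ must be extracted from purely homological input. My approach is to combine the acyclicity hypotheses with the \textsc{Mayer}--\textsc{Vietoris}-type short exact sequence of $L$-modules
\[
  0 \longrightarrow L \longrightarrow R((x)) \oplus R((x^{-1})) \longrightarrow N \longrightarrow 0 \, ,
\]
where $N$ denotes the $L$-module of formal two-sided series $\sum_{n \in \bZ} a_n x^n$, in order to identify $C$, up to $R$-homotopy, with a shift of $C \tensor_L N$. The hard part will be upgrading this purely derived-category identification to the construction of an actual self-equivalence of a strict $R$-perfect complex whose mapping torus models $C$ on the nose; the mapping-torus bookkeeping developed in \S2 of the paper is what should make this upgrade possible.
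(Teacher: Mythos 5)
The paper does not prove Theorem~\ref{thm:ranicki}; it cites it directly from \textsc{Ranicki} \cite[Theorem~2]{Ranicki-findom} and uses it as a black box. The nearest thing to a proof in the paper is the implication \eqref{item:acyclicfixed}~$\Rightarrow$~\eqref{item:fd} of Theorem~\ref{thm:main} specialised to $n=1$, and that argument itself rests on \textsc{Ranicki}'s algebraic transversality theorem, cited as \cite[Proposition~1]{Ranicki-findom}. With that understood, here is an assessment of your proposal against the argument the theorem actually requires.

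Your sketch of \eqref{item:R_fd}~$\Rightarrow$~\eqref{item:R_acyclic} is sound and is essentially \textsc{Ranicki}'s: replace $C$ up to $L$-equivalence by a mapping torus $T(\phi)$ of a self-equivalence $\phi$ of a strict $R$-perfect complex $D$ (this is the content of Lemmas \ref{lem:basic_prop}, \ref{lem:C_is_Tx} and Proposition~\ref{prop:weak_Mather}), then invert $\phi \tensor 1 - 1 \tensor x$ up to homotopy over each \textsc{Novikov} ring by a geometric series. Two minor cautions: the mapping torus is a mapping \emph{cone}, not a cokernel, so you need the injectivity of $\phi \tensor 1 - 1 \tensor x$ together with Lemma~\ref{lem:cone_coker} to justify the short exact sequence you wrote; and there is no ``universal property'' of the mapping torus doing the work in the reformulation---what you actually use is Lemma~\ref{lem:C_is_Tx} plus \textsc{Mather}'s trick.

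The genuine gap is in \eqref{item:R_acyclic}~$\Rightarrow$~\eqref{item:R_fd}. The short exact sequence $0 \to L \to R((x)) \oplus R((x^{-1})) \to N \to 0$ is correct, and tensoring it levelwise with the free complex $C$ and using the acyclicity hypotheses via Corollary~\ref{cor:ker_cone} does give a quasi-isomorphism $C \to (C \tensor_L N)[-1]$. But this on its own says nothing about $R$-finite domination: $N$ is an enormous $R$-module (a product of copies of $R$), so $C \tensor_L N$ is a complex of infinite-rank, non-projective $R$-modules, the quasi-isomorphism is not automatically a homotopy equivalence, and there is no finite model anywhere in sight. The step you defer---``upgrading this purely derived-category identification'' by ``mapping-torus bookkeeping''---is exactly where the theorem is proved, and nothing in \S\ref{sec:algebr-mapp-torus} bridges it. The missing ingredient is algebraic transversality: one first splits a levelwise-free replacement $D$ of $C$ into a short exact sequence $0 \to L \to D^+ \oplus D^- \to D \to 0$ with $D^+ \in \Chb(R[x])$, $D^- \in \Chb(R[x^{-1}])$ and $L \in \Chb(R)$ all finite and free, together with isomorphisms $D^\pm \tensor R[x,x^{-1}] \cong D$. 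It is the finite complex $L$ that supplies the candidate $R$-model, and a \textsc{Mayer}--\textsc{Vietoris} comparison using the completions $R[[x]]$ and $R[[x^{-1}]]$---not merely the \textsc{Novikov} rings---then exhibits $C$ as a homotopy retract of $L$. This is precisely what the paper does in the proof of \eqref{item:acyclicfixed}~$\Rightarrow$~\eqref{item:fd}, and it is the essential input your route through $C \tensor_L N$ never produces.
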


\noindent Here we denote by $R[[x]]$ the ring of formal power series
in the indeterminate~$x$, and write $R((x))$ for the localisation
of~$R[[x]]$ by~$x$. That is, $R((x))$ is the ring of formal
\textsc{Laurent} series
\[\sum_{j=k}^\infty a_j x^j \ , \qquad k \in \bZ \ ,\] also known as
the \textsc{Novikov} ring of~$R$ in~$x$. Similarly, $R[[x^{-1}]]$ is
the ring of formal power series in the indeterminate~$x^{-1}$, and the
\textsc{Novikov} ring $R((x^{-1}))$ is its localisation
by~$x^{-1}$. Elements of the latter can be written as formal
\textsc{Laurent} series of the type
\[\sum_{j=-\infty}^k a_j x^j \ , \qquad k \in \bZ \ .\]

\medbreak

As it stands this result is not adapted to iteration. In more detail,
suppose that $R$~itself is a \textsc{Laurent} ring $R = K[y,y^{-1}]$,
over some ring~$K$; one would want then to be able to apply
\textsc{Ranicki}'s theorem twice: first to $R \subset R[x,x^{-1}]$,
then to $K \subset K[y,y^{-1}] = R$. One difficulty here is that the
first application leaves us with a chain complex which consists of
projective rather than free modules. In addition, the \textsc{Laurent}
variables are dealt with in a specific order which, intuitively
speaking, should have no bearing on the question of finite
domination. Both issues are addressed in our main result below.

\medbreak

Write $R_n$~for the ring of \textsc{Laurent} polynomials in
$n$~indeterminates with coefficients in~$R$,
\[R_n = R[x_1,\, x_1^{-1},\, x_2,\, x_2^{-1},\, \cdots,\, x_n,\,
x_n^{-1}] \ ,\] so that $R_0 = R$ and $R_k = R_{k-1}[x_k,x_k^{-1}]$
for $k \geq 1$. We will prove the following generalisation of
Theorem~\ref{thm:ranicki} to many variables:

\begin{theorem}
  \label{thm:main}
  Let $n \geq 1$. For a bounded below complex~$C$ of projective
  $R_n$\nbd-modules (not necessarily finitely generated) the following
  four conditions are equivalent:
  \begin{enumerate}[{\rm (a)}]
  \item \label{item:fd} The complex~$C$ is $R$\nbd-finitely dominated.
  \item \label{item:hf} The complex~$C$ is $R$\nbd-finitely dominated,
    and for all $n!$ re-numberings of the variables $x_1,\, x_2,\,
    \cdots,\, x_n$, the complex~$C$ is homotopy $R_j$\nbd-finite for
    $j = 1,\, 2,\, \cdots,\, n$.
  \item \label{item:acyclicall} $C$ is $R_n$\nbd-finitely dominated,
    and for all $n!$ re-numberings of the variables $x_1,\, x_2,\,
    \cdots,\, x_n$ the following chain complexes are acyclic:
    \[C \tensor_{R_j} R_{j-1}((x_j)) \quad \text{and} \quad C
    \tensor_{R_j} R_{j-1}((x_j^{-1})) \ , \quad 1 \leq j \leq n \ .\]
  \item \label{item:acyclicfixed} $C$ is $R_n$\nbd-finitely dominated,
    and for some re-numbering of the variables $x_1,\, x_2,\, \cdots,\,
    x_n$ the following chain complexes are acyclic:
    \[C \tensor_{R_j} R_{j-1}((x_j)) \quad \text{and} \quad C
    \tensor_{R_j} R_{j-1}((x_j^{-1})) \ , \quad 1 \leq j \leq n \ .\]
  \end{enumerate}
\end{theorem}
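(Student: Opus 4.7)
The plan is to prove the chain of implications (a) $\Rightarrow$ (b) $\Rightarrow$ (c) $\Rightarrow$ (d) $\Rightarrow$ (a); the middle step (c) $\Rightarrow$ (d) is immediate. The implications (b) $\Rightarrow$ (c) and (d) $\Rightarrow$ (a) will be iterated applications of Ranicki's Theorem~\ref{thm:ranicki} in opposite directions, whereas (a) $\Rightarrow$ (b) requires an explicit construction of finite models via iterated algebraic mapping tori.

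For (a) $\Rightarrow$ (b), fix a homotopy equivalence $C \simeq_R D$ with $D$ a bounded complex of finitely generated projective $R$-modules. Multiplication by each indeterminate~$x_i$ on~$C$ transports through this equivalence to a self-chain-equivalence $f_i$ of~$D$, and these maps commute pairwise up to chain homotopy. Using the algebraic mapping torus machinery of~\S2, I iteratively form $T_0 = D$ and $T_j = T(T_{j-1}, f_j)$; each $T_j$ is a bounded complex of finitely generated projective $R_j$-modules with $C \simeq_{R_j} T_j$. Because the mapping torus at each level is a direct sum of two shifted copies of its base (tensored up to $R_j$), the finiteness obstruction $\chi(T_j) \in \tilde K_0(R_j)$ telescopes to zero, so $T_j$ is homotopy $R_j$-finite. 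Since the ordering of the indeterminates was arbitrary, (b) follows. The main obstacle in this step is the iterated mapping torus construction itself: since the~$f_i$ commute only up to homotopy, extending each $f_{j+1}$ to a self-chain-equivalence of~$T_j$ requires coherent choices of homotopies, which is the technical content of~\S2.

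For (b) $\Rightarrow$ (c): homotopy $R_n$-finiteness already implies $R_n$-finite-domination, and for each $j$ with $1 \leq j \leq n$ condition (b) supplies $C \simeq_{R_j} F_j$ with $F_j$ a bounded complex of finitely generated free $R_j$-modules, while $C$ is also $R_{j-1}$-finitely dominated (either from (a) if $j=1$, or from homotopy $R_{j-1}$-finiteness otherwise); applying Theorem~\ref{thm:ranicki} to~$F_j$ with subring $R_{j-1} \subset R_j$ then yields the required acyclicities of $F_j \tensor_{R_j} R_{j-1}((x_j^{\pm 1}))$, which transfer to~$C$ via $C \simeq_{R_j} F_j$. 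For (d) $\Rightarrow$ (a), I proceed by downward induction: since $C$ is $R_n$-finitely dominated, write $C \simeq_{R_n} E_n$ with $E_n$ a bounded complex of finitely generated projective $R_n$-modules. A routine extension of Theorem~\ref{thm:ranicki} to complexes of finitely generated projective modules (obtained by completing projectives to free modules via contractible summands), applied to~$E_n$ with subring $R_{n-1} \subset R_n$ and combined with the acyclicity of $E_n \tensor_{R_n} R_{n-1}((x_n^{\pm 1}))$ supplied by the hypothesis on~$C$, shows that~$E_n$, and hence~$C$, is $R_{n-1}$-finitely dominated; pick $E_{n-1}$ a bounded complex of finitely generated projective $R_{n-1}$-modules with $C \simeq_{R_{n-1}} E_{n-1}$ and iterate. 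After $n$ steps we obtain $C \simeq_R E_0$, which is~(a).
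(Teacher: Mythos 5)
Two of your four implications conceal genuine gaps; your steps $(b)\Rightarrow(c)$ and $(c)\Rightarrow(d)$ are fine and match the paper.

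In $(a)\Rightarrow(b)$ you iterate $T_j = T(T_{j-1},f_j)$ and acknowledge that this requires extending each $f_j$, a self-map of~$D$, to a self-map of the previously built torus $T_{j-1}$, claiming the necessary coherence of homotopies ``is the technical content of \S2.'' It is not: \S2 supplies Mather's trick (Proposition~\ref{prop:weak_Mather}) and the quasi-isomorphism $T(x)\to C$ of Lemma~\ref{lem:C_is_Tx}, but no coherence machinery for iterated mapping tori. The paper sidesteps coherence altogether: at each stage~$k$, having just shown $C$ to be homotopy $R_k$-finite, one chooses a fresh finite free model $D^{(k)}$ over $R_k$ with equivalences $f^{(k)}\colon C\to D^{(k)}$, $g^{(k)}\colon D^{(k)}\to C$, and forms $T(f^{(k)}x_{k+1}g^{(k)})$. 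Here the multiplication action of $x_{k+1}$ is a genuine $R_k$-linear chain self-map of the fixed complex~$C$, not of a previously built torus, and the chain of homotopy equivalences $C\simeq T(x_{k+1})\simeq T(x_{k+1}g^{(k)}f^{(k)})\simeq T(f^{(k)}x_{k+1}g^{(k)})$ follows from Lemmas~\ref{lem:C_is_Tx} and~\ref{lem:basic_prop} and Proposition~\ref{prop:weak_Mather} without any coherent system of homotopies being required.

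The more serious gap is in $(d)\Rightarrow(a)$. The proposed ``routine extension'' of Theorem~\ref{thm:ranicki} to projective complexes by ``completing projectives to free modules via contractible summands'' cannot work: a bounded complex~$E$ of finitely generated projective $R_n$-modules is homotopy equivalent to a bounded complex of finitely generated \emph{free} $R_n$-modules if and only if its finiteness obstruction $\chi(E)\in\tilde K_0(R_n)$ vanishes, and condition~(d) gives no control on $\chi(E)$. In fact the vanishing of the finiteness obstruction over each~$R_j$ is precisely the content of condition~(b), which you are trying to derive, so assuming it would be circular. This is why the paper's proof of $(d)\Rightarrow(a)$ is structurally entirely different: it forms $D = C\oplus C'$ with $C'$ a complex with trivial differentials chosen so that $D$ is free ($D$ is \emph{not} homotopy equivalent to~$C$), applies algebraic transversality \cite[Proposition~1]{Ranicki-findom} to~$D$ to obtain the short exact sequence~\eqref{eq:ses_algtrans}, uses the $\Gamma$-construction and the acyclicity of $C\otimes_{R_n}R_{n-1}((x_n^{\pm1}))$ to exhibit $C$, up to homotopy, as a direct summand of a bounded complex of finitely generated free $R_{n-1}$-modules, and only then concludes via the retract criterion of \cite[Proposition~3.2~(ii)]{Ranicki:algfin}. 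Your proposal contains no substitute for this direct-summand argument.
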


Note that this theorem says in particular that an $R$\nbd-finitely
dominated chain complex of $R_n$\nbd-modules is automatically homotopy
equivalent over~$R_k$, $1 \leq k \leq n$, to an $R_k$\nbd-finite
complex consisting of free rather than projective
modules. Nevertheless the proof forces us to work with chain complexes
of modules which {\it a priori} consist of projective modules.

\medbreak

We start by fixing our sign conventions for some constructions from
homological algebra, together with a collection of standard results
which will be used repeatedly in the sequel. We then develop the
relevant theory of mapping tori, and apply all this in the proof of
the main theorem. We finish the paper by giving a concrete non-trivial
example of a finitely dominated chain complex over a \textsc{Laurent}
ring in finitely many indeterminates, and by discussing finite
domination over fields, which essentially reduces to an exercise in
linear algebra.

The methods used here borrow heavily from those of \textsc{Ranicki}
\cite{Ranicki-findom}, modified to allow for the presence of several
indeterminates and non-free modules. It is possible to approach finite
domination over \textsc{Laurent} rings in several indeterminates from
the point of view of toric geometry; this perspective yields a
completely different set of conditions, and will be presented in a
forthcoming paper.

\section{Mapping cones and mapping tori}
\label{sec:algebr-mapp-torus}

\subsection*{Chain complexes and mapping cones}

We begin with listing some conventions. We will consider arbitrary
chain complexes of (right) modules over some ring with unit~$A$; we
think of chain complexes as being ``vertical''. The $k$th suspension
($k \in \bZ$) of a chain complex~$C$ is the chain complex $C[k]$
defined by $C[k]_\ell = C_{\ell - k}$ with differential changed by the
sign $(-1)^k$.

A twofold chain complex is a chain complex in the category of chain
complexes, that is, a family $(D_{p,q})_{p,q \in \bZ}$ of
$R$\nbd-modules together with ``horizontal'' and ``vertical''
differential
\[\partial_h\colon D_{p,q} \rTo D_{p-1,q} \quad \text{and}
\quad \partial_v \colon D_{p,q} \rTo D_{p,q-1}\] satisfying
$\partial_h^2 = 0$, $\partial_v^2=0$ and $\partial_h \partial_v
= \partial_v \partial_h$. The {\it total complex\/} of the twofold
chain complex~$D$ is a chain complex $\Tot(D)$. In chain degree~$n$ we
have, by definition,
\[\Tot(D)_n = \bigoplus_{p+q = n} D_{p,q} \ ,\] and the differential
is induced by
\[\partial_h \colon D_{p,q} \rTo D_{p-1,q} \quad \text{and}
\quad (-1)^p \partial_v \colon D_{p,q} \rTo D_{p,q-1} \ .\]

A map of chain complexes $f \colon C \rTo B$ can be considered as a
twofold chain complex with $B$ in column $p=0$ and $C$ in column
$p=1$, and horizontal differential given by~$f$. Its total complex is
known as the {\it mapping cone of~$f$}, denoted $\mathrm{Cone}\,
(f)$. We have $\big( \mathrm{Cone} (f) \big)_k = C_{k-1} \oplus
B_k$. There is a natural long exact homology sequence associated to
this construction:
\begin{equation}
  \label{eq:cone_les}
  \ldots \rTo^f H_k B \rTo H_k \mathrm{Cone}(f) \rTo H_{k-1} C \rTo^f
  H_{k-1} B \rTo \ldots
\end{equation}
In particular, application of the Five Lemma shows that the mapping
cone construction is invariant under quasi-isomorphism of maps of
chain complexes. That is, given a commutative diagram of chain
complexes
\begin{diagram}
  B & \rTo^f & C \\ \dTo<\simeq && \dTo<\simeq \\ D & \rTo^g & E
\end{diagram}
where the vertical maps are quasi-isomorphisms, the induced map
\[\mathrm{Cone}\, (f) \rTo \mathrm{Cone}\, (g)\] is a
quasi-isomorphism as well. --- Let $f \colon C \rTo B$ be a map of
chain complexes as before.  The canonical projection from the
$B$\nbd-summands assemble to a natural map $\mathrm{Cone}\,(f) \rTo
\coker(f)$.

\begin{lemma}
  \label{lem:cone_coker}
  If $f \colon C \rTo B$ is an injective map of chain complexes, the
  natural map $\mathrm{Cone}\,(f) \rTo \coker(f)$ is a
  quasi-isomorphism.
\end{lemma}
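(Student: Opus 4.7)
The plan is to analyse the kernel $K$ of the natural projection $p \colon \mathrm{Cone}\,(f) \to \coker(f)$, show that $K$ is acyclic, and then read off the lemma from the long exact homology sequence of the short exact sequence
\[
0 \to K \to \mathrm{Cone}\,(f) \xrightarrow{\;p\;} \coker(f) \to 0
\]
of chain complexes.

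First I would describe $K$ explicitly. Since $\big(\mathrm{Cone}\,(f)\big)_k = C_{k-1} \oplus B_k$ and $p$ is given by projection onto $B$ followed by quotienting by the image of $f$, an element $(c, b)$ lies in $K_k$ precisely when $b \in f(C_k)$. Injectivity of $f$ provides a unique $c' \in C_k$ with $f(c') = b$, and the assignment $(c, f(c')) \mapsto (c, c')$ gives a graded $A$\nbd-module isomorphism $K_k \iso C_{k-1} \oplus C_k$. A short calculation using $\partial_B \circ f = f \circ \partial_C$ together with the $(-1)^p$\nbd-sign convention for total complexes shows that, under this identification, the induced differential on $K$ coincides with the differential of $\mathrm{Cone}\,(\id_C)$.

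Next I would show $\mathrm{Cone}\,(\id_C)$ is acyclic. Applying the long exact sequence~(\ref{eq:cone_les}) to $\id_C$, the connecting maps become $(\id_C)_* = \id \colon H_{k-1} C \to H_{k-1} C$; these are isomorphisms, so exactness immediately forces $H_k \mathrm{Cone}\,(\id_C) = 0$ for every~$k$. (An explicit contracting homotopy of the shape $(c, c') \mapsto (\pm c', 0)$ would also do the job.)

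Finally, the short exact sequence above yields a long exact homology sequence in which every third term $H_*(K)$ vanishes, so $p_* \colon H_* \mathrm{Cone}\,(f) \to H_* \coker(f)$ is an isomorphism in every degree, which is exactly the claim. The only potentially delicate step is the first one: careful sign bookkeeping is needed to confirm that the identification $K \iso \mathrm{Cone}\,(\id_C)$ is an isomorphism of chain complexes and not merely of graded modules. Everything else is formal.
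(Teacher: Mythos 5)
Your proof is correct, and it takes a genuinely different route from the paper's. The paper assembles the long exact sequence~\eqref{eq:cone_les} for $\mathrm{Cone}\,(f)$ and the long exact sequence coming from $0 \to C \to B \to \coker(f) \to 0$ into a commutative ladder with two of every three vertical maps being identities, then invokes the Five Lemma. You instead identify the kernel of the natural surjection $p\colon \mathrm{Cone}\,(f) \to \coker(f)$: injectivity of~$f$ gives an isomorphism $K_k \iso C_{k-1} \oplus C_k$, and a sign check against the $(-1)^p$ convention confirms that $K \iso \mathrm{Cone}\,(\id_C)$ as chain complexes, which is acyclic (in fact contractible, as your explicit homotopy $(c,c') \mapsto (c',0)$ shows, since $(\partial s + s\partial)(c,c') = (-\partial c', c') + (c + \partial c', 0) = (c,c')$). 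The long exact sequence of $0 \to K \to \mathrm{Cone}\,(f) \to \coker(f) \to 0$ then finishes the argument. Your version is slightly more hands-on and makes visible \emph{why} the map is a quasi-isomorphism (the kernel is contractible, not just acyclic), at the cost of a sign bookkeeping step that the paper's Five Lemma argument sidesteps entirely. Both are complete and correct.
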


\begin{proof}
  The long exact sequence in~\eqref{eq:cone_les} and the long exact
  sequence associated to the short exact sequence
  \[0 \rTo C \rTo^f B \rTo \coker(f) \rTo 0\] assemble into a
  commutative ladder diagram, with two out of three maps the
  identity. By the Five Lemma, the remaining maps (which are induced
  by the map under investigation) are isomorphisms.
\end{proof}

We have defined the mapping cone by totalising a twofold chain
complex. Conversely, one can describe totalisation by iterating the
mapping cone construction. For us, the following special case will be
sufficient:

\begin{lemma}
  \label{lem:double_cone}
  Suppose we have maps of chain complexes $f \colon C \rTo B$ and $g
  \colon B \rTo A$ with $gf=0$. Let $D$~denote the twofold chain
  complex having $C$, $B$ and~$A$ in columns $2$, $1$ and~$0$, with
  horizontal differential given by $f$ and~$g$.  The map~$f$ induces
  an inclusion $C[1] \rTo \mathrm{Cone}\, (g)$, and we have an
  equality of chain complexes $\mathrm{Cone}\, \big( C[1] \rTo
  \mathrm{Cone}\, (g) \big) = \Tot (D)$.  \qed
\end{lemma}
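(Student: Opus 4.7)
The plan is to verify the stated equality by a direct, straightforward comparison of the two chain complexes in each degree, making careful use of the paper's explicit sign conventions. Nothing deep is at stake; this is essentially a bookkeeping lemma.

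First I would pin down the map $\phi \colon C[1] \rTo \mathrm{Cone}(g)$. Recall $C[1]_k = C_{k-1}$ (with differential $-\partial_C$ by the shift convention) and $\mathrm{Cone}(g)_k = B_{k-1} \oplus A_k$; I would define $\phi$ in degree~$k$ as the obvious map $c \mapsto (f(c), 0)$. Verifying that $\phi$ is a chain map is a one-line calculation: using that $f$~is a chain map, the vertical component gives $-\partial_B f(c) = -f(\partial_C c)$, which is $\phi(-\partial_C c) = \phi(\partial_{C[1]} c)$; while the horizontal component is $gf(c) = 0$ by hypothesis.

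Next I would compare underlying modules in each degree. From the definition of the mapping cone applied to $\phi$,
\[
\mathrm{Cone}(\phi)_k \;=\; C[1]_{k-1} \oplus \mathrm{Cone}(g)_k \;=\; C_{k-2} \oplus B_{k-1} \oplus A_k \ ,
\]
while directly from the definition of the total complex,
\[
\Tot(D)_k \;=\; D_{2,k-2} \oplus D_{1,k-1} \oplus D_{0,k} \;=\; C_{k-2} \oplus B_{k-1} \oplus A_k \ .
\]
The summands agree on the nose, so what remains is a differential calculation.

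Finally I would compute both differentials using the sign rule $\partial_h + (-1)^p \partial_v$ from the definition of $\Tot$. For $\Tot(D)$, applied to $(c, b, a) \in C_{k-2} \oplus B_{k-1} \oplus A_k$, the $p=2$ column contributes $(\partial_C c, f(c), 0)$, the $p=1$ column contributes $(0, -\partial_B b, g(b))$, and the $p=0$ column contributes $(0,0,\partial_A a)$; summing yields
\[
\bigl(\partial_C c,\; f(c) - \partial_B b,\; g(b) + \partial_A a\bigr) \ .
\]
For $\mathrm{Cone}(\phi)$, the mapping-cone formula applied to the chain map~$\phi$ gives $(-\partial_{C[1]} c, \phi(c) + \partial_{\mathrm{Cone}(g)}(b,a))$; expanding $\partial_{C[1]} = -\partial_C$, $\phi(c) = (f(c),0)$, and $\partial_{\mathrm{Cone}(g)}(b,a) = (-\partial_B b, g(b) + \partial_A a)$ produces exactly the same triple. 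The equality of chain complexes follows. The main obstacle is purely clerical: keeping track of the two independent sign conventions (the shift sign in $C[1]$ and the column sign $(-1)^p$ in the total complex), which fortunately conspire to cancel and yield agreement.
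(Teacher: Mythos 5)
Your verification is correct and complete. The paper states this lemma with the proof omitted (it ends in \texttt{\textbackslash qed}), treating it as a routine sign-bookkeeping check, and your direct degree-by-degree comparison of modules and differentials is exactly the computation being elided: you correctly identify $\phi(c) = (f(c),0)$, correctly use the paper's conventions (shift sign $(-1)^k$ for $C[k]$ and column sign $(-1)^p\partial_v$ for the total complex), and the two differentials both come out as $(\partial_C c,\ f(c)-\partial_B b,\ g(b)+\partial_A a)$.

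One minor remark: the lemma's word ``inclusion'' for the map $\phi\colon C[1]\to\mathrm{Cone}(g)$ is only literally accurate when $f$ is injective (as it is in the application, Corollary~\ref{cor:ker_cone}); your definition $\phi(c)=(f(c),0)$ is the intended map in any case, and nothing in your argument uses injectivity, so this does not affect the proof.
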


\begin{corollary}
  \label{cor:ker_cone}
  Suppose that $0 \rTo C \rTo^f B \rTo^g A \rTo 0$ is a short exact
  sequence of chain complexes. Then there is a quasi-isomorphism
  \[C \rTo \big( \mathrm{Cone}\, (g) \big) [-1] \ .\]
\end{corollary}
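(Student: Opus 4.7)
The plan is to produce the desired quasi-isomorphism from Lemma~\ref{lem:double_cone}, and then use Lemma~\ref{lem:cone_coker} together with the acyclicity of $\mathrm{Cone}\,(\id_A)$ to verify that it is indeed a quasi-isomorphism.

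Since the sequence is short exact, we have $gf = 0$, so Lemma~\ref{lem:double_cone} applies and produces an inclusion $\iota \colon C[1] \rTo \mathrm{Cone}\,(g)$ induced by~$f$; explicitly, $\iota$ takes $c \in C[1]_k = C_{k-1}$ to $(0, f(c)) \in A_k \oplus B_{k-1} = \mathrm{Cone}\,(g)_k$. Suspending $\iota$ by~$-1$ will then give a map $C \rTo \mathrm{Cone}\,(g)[-1]$, so it suffices to prove that $\iota$ is a quasi-isomorphism.

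Since $f$ is injective, so is $\iota$, and Lemma~\ref{lem:cone_coker} reduces the task to showing that $\coker(\iota)$ is acyclic. I compute
\[\coker(\iota)_k \;=\; \bigl(A_k \oplus B_{k-1}\bigr) / \bigl(\{0\} \oplus f(C_{k-1})\bigr) \;\iso\; A_k \oplus A_{k-1} \ ,\]
using $B/f(C) \iso A$ from the short exact sequence. A direct check shows that the induced differential is $(\alpha, \beta) \mapsto (d_A \alpha + \beta,\, -d_A \beta)$, which after swapping the two summands coincides with the differential of $\mathrm{Cone}\,(\id_A)$. That complex is acyclic by the long exact sequence~\eqref{eq:cone_les} applied to $\id_A$, in which every third map is an isomorphism and hence the intervening homology groups vanish.

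The main obstacle --- more of a careful bookkeeping task than a conceptual one --- will be tracking the signs arising from the suspension and the two applications of $\Tot$ carefully enough to verify that the induced differential on $\coker(\iota)$ really does agree with that of $\mathrm{Cone}\,(\id_A)$ on the nose.
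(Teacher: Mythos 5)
Your argument is correct, but it takes a genuinely different route from the paper's. Both proofs start from Lemma~\ref{lem:double_cone} to produce the map $\mu \colon C[1] \to \mathrm{Cone}\,(g)$ (your $\iota$) and then reduce to showing $\mathrm{Cone}\,(\mu) = \Tot(D)$ is acyclic. The paper finishes by invoking the first spectral sequence of the twofold complex~$D$: its $E^1$-page consists of the homology of the rows $0 \to C_p \to B_p \to A_p \to 0$, which vanishes by exactness, so $\Tot(D)$ is acyclic. You instead note that $\iota$ is injective, apply Lemma~\ref{lem:cone_coker} to replace $\mathrm{Cone}\,(\iota)$ by $\coker(\iota)$ up to quasi-isomorphism, compute $\coker(\iota)_k \cong A_k \oplus A_{k-1}$ using $B/f(C) \cong A$, and recognise the resulting complex as a cone on the identity of~$A$. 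This is more elementary (no spectral sequence) at the cost of the sign check you flag at the end --- which does work out under the paper's conventions: with the induced differential $(\alpha,\beta)\mapsto(d_A\alpha+\beta,\,-d_A\beta)$, the swap of summands is literally a chain isomorphism onto $\mathrm{Cone}\,(\id_A)$, and one can even shortcut the appeal to~\eqref{eq:cone_les} by writing down the contracting homotopy $(\alpha,\beta)\mapsto(0,\alpha)$. Both approaches use the same two inputs (exactness of the given sequence and acyclicity of an ``identity column''); the paper's is slicker, yours is more hands-on and self-contained.
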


\begin{proof}
  By the previous Lemma we have a map $\mu \colon C[1] \rTo
  \mathrm{Cone}\, (g)$, and this map is a quasi-isomorphism if and only
  if its mapping cone is acyclic. But its mapping cone is $\Tot (D)$,
  using the notation of that Lemma. There is a convergent spectral
  sequence
  \[E^1_{p,q} = H_q D_{*,p} \Longrightarrow H_{q+p} \Tot (D) \ ,\]
  cf.~\cite[\S XI.6]{MacLane-homology}; by exactness, its
  $E^1$\nbd-term is trivial, hence $\Tot(D)$~is acyclic. It follows
  that $\mu[-1] \colon C \rTo \big( \mathrm{Cone}\, (g) \big) [-1]$ is
  a quasi-isomorphism.
\end{proof}

\begin{proposition}
  \label{cone-no-obstruction}
  Suppose $C$ is an $R$\nbd-finitely dominated complex of projective
  $R$\nbd-modules. Then for any self map $f \colon C \rTo C$ the
  complex $\mathrm{Cone}\, (f)$ is homotopy $R$\nbd-finite.
\end{proposition}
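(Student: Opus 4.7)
The plan is to reduce the statement to the well-known fact that a finitely dominated complex is homotopy $R$-finite if and only if its finiteness obstruction in $\tilde K_0(R)$ vanishes, and then to compute the finiteness obstruction of the mapping cone by a direct telescoping argument.

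First I would invoke the hypothesis that $C$ is $R$-finitely dominated to fix a strict $R$-perfect complex $D \in \Chb(R)$ together with a homotopy equivalence $\phi \colon C \rTo D$ and a homotopy inverse $\psi \colon D \rTo C$. Set $f' = \phi f \psi \colon D \rTo D$, which is a self-map of $D$. The square relating $f$ and $f'$ via $\phi$ commutes up to chain homotopy, since $\phi f \simeq \phi f \psi \phi = f' \phi$; this is enough to produce a chain homotopy equivalence $\mathrm{Cone}(f) \simeq \mathrm{Cone}(f')$ by a standard argument (one may either replace $\phi$ by a strictly commuting representative via the mapping cylinder and then apply the quasi-isomorphism invariance of the cone, or else write the equivalence out by hand using the homotopy). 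In particular, $\mathrm{Cone}(f)$ is itself $R$-finitely dominated, and the finiteness obstruction of $\mathrm{Cone}(f)$ coincides with that of $\mathrm{Cone}(f')$.

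Now $\mathrm{Cone}(f')$ is by construction a bounded chain complex whose module in degree $k$ is $D_{k-1} \oplus D_k$, and each $D_j$ is finitely generated projective; so $\mathrm{Cone}(f')$ is strict $R$-perfect. Its finiteness obstruction is computed directly as
\[\chi\bigl(\mathrm{Cone}(f')\bigr) = \sum_{k \in \bZ} (-1)^k \bigl([D_{k-1}] + [D_k]\bigr) = -\chi(D) + \chi(D) = 0 \in \tilde K_0(R) \ ,\]
the two sums telescoping via the index shift. Hence $\chi(\mathrm{Cone}(f)) = 0$, and the criterion recalled just after the definition of the finiteness obstruction (\cite[Theorem~1.7.12]{Rosenberg}) shows that $\mathrm{Cone}(f)$ is homotopy $R$-finite, as required.

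The only real subtlety is step comparing $\mathrm{Cone}(f)$ and $\mathrm{Cone}(f')$: Lemma~\ref{lem:cone_coker} and the quasi-isomorphism invariance discussed above are stated for strictly commutative squares, while the square between $f$ and $f'$ commutes only up to homotopy. I expect this to be the only place where one has to insert a short additional argument, handled either by passing to the mapping cylinder of $\phi$ or by writing down an explicit homotopy equivalence between the two mapping cones; the remainder is a matter of bookkeeping in $\tilde K_0(R)$.
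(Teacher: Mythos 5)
Your proof is correct and takes essentially the same approach as the paper: the $K_0$-class of the mapping cone telescopes to zero, so the finiteness obstruction vanishes and homotopy $R$-finiteness follows from the cited criterion. The paper's own write-up is terser --- it simply asserts $[\mathrm{Cone}\,(f)] = [C[1] \oplus C] = -[C]+[C] = 0$ ``since $K$-theory doesn't detect differentials,'' leaving implicit the reduction to a strict perfect model $D$ and a self-map $f' = \phi f \psi$ of $D$ that you spell out (including the homotopy-commutativity subtlety); the paper also sketches, as an aside, a second explicit construction for the case $C$ strict perfect, obtained by attaching contractible two-term complexes so as to make the modules of $\mathrm{Cone}\,(f)$ free.
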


\begin{proof}
  It is enough to show that the finiteness obstruction of~$C$ in
  $\tilde K_0(R)$ vanishes: since $K$-theory doesn't detect
  differentials, we have
  \[[\mathrm{Cone}\,(f)] = [C[1] \oplus C] = -[C] + [C] = 0 \in \tilde
  K_0 (R)\ .\]

  If $C$ is strict perfect one can easily give an explicit proof: for
  each $C_n$ choose a finitely generated projective module~$D_n$ such
  that $C_n \oplus D_n$ is free; choose $D_n = 0$ if $C_n = 0$. Then
  attaching the contractible two-step chain complexes $D_n \rTo^= D_n$
  (concentrated in degrees $n+1$ and~$n$) to $\mathrm{Cone}\,(f)$
  results in a bounded chain complex of finitely generated free
  $R$\nbd-modules which is homotopy equivalent, via the projection,
  to~$\mathrm{Cone}\,(f)$.
\end{proof}

\goodbreak

\subsection*{Algebraic mapping tori}

\begin{definition}
  \label{def:algebr-mapp-torus}
  Let $C$ be an arbitrary $R$\nbd-module chain complex, and let $h
  \colon C \rTo C$ be any chain map. The {\it algebraic mapping
    torus~$T(h)$ of~$h$} is defined as
  \[T(h) = \mathrm{Cone}\, \big( C \tensor_R R[x,x^{-1}]
  \rTo[l>=5em]^{h \tensor 1 - 1 \tensor x} C \tensor_R R[x,x^{-1}]
  \big) \ .\] Here the map ``$x$'' is given by the multiplication
  action of the indeterminate~$x$ on~$R[x,x^{-1}]$.
\end{definition}

By construction, $T(h)$~is an $R[x,x^{-1}]$\nbd-module chain complex
which is bounded if $C$~is bounded. If $C$ consists of finitely
generated (\resp{} projective, \resp{} free) $R$\nbd-modules, then
$T(h)$ consists of finitely generated (\resp{} projective, \resp{}
free) $R[x,x^{-1}]$\nbd-modules.

The mapping torus construction is functorial on the category of self
maps of $R$\nbd-module chain complexes in the following sense: a
commutative diagram
\begin{diagram}[LaTeXeqno]
  \label{diag:square}
  C & \rTo^f & C \\ \dTo<\alpha && \dTo<\alpha \\ D & \rTo^g & D
\end{diagram}
induces an $R[x,x^{-1}]$\nbd-linear chain map $\alpha_* \colon T(f)
\rTo T(g)$, and this assignment is compatible with vertical
composition (vertical stacking of square diagrams). Moreover, if
$\alpha$ is a quasi-isomorphism then so is~$\alpha_*$. Indeed, the
long exact sequences of mapping cones yield a commutative ladder
diagram {\small
\begin{diagram}[small]
  \cdots & H_{n+1} T(f) & \rTo & H_n \big(C \tensor_R R[x,x^{-1}]\big) & \rTo^\eta & H_n \big(C \tensor_R R[x,x^{-1}]\big) & \rTo & H_n T(f) & \cdots \\
  & \dTo>{\alpha_*} && \dTo>\alpha && \dTo<\beta && \dTo<{\alpha_*} \\
  \cdots & H_{n+1} T(g) & \rTo & H_n \big(D \tensor_R R[x,x^{-1}]\big) & \rTo^\zeta & H_n \big(D \tensor_R R[x,x^{-1}]\big) & \rTo & H_n T(g) & \cdots \\
\end{diagram}}%
(where $\eta = f \tensor 1 - 1 \tensor x$ and $\zeta = g \tensor 1 - 1
\tensor x$) with exact rows; since $R[x,x^{-1}]$~is a free
$R$\nbd-module, the two middle vertical maps are isomorphisms. It
follows from the Five Lemma that $\alpha_*$ is a quasi-isomorphism as
claimed.

\begin{lemma}
  \label{lem:basic_prop}
  \begin{enumerate}[{\rm (1)}]
  \item\label{item:1} Let $h \colon C \rTo C$ be a self map of an
    arbitrary chain complex~$C$ of $R$\nbd-modules. The map $h_*
    \colon T(h) \rTo T(h)$ is chain homotopic to~$x$, the
    ``multiplication by~$x$'' map. In particular, $h_*$ is a
    quasi-isomorphism.
  \item\label{item:2} Let $g,h \colon C \rTo C$ be homotopic chain
    maps. Then the mapping tori $T(g)$ and~$T(h)$ are isomorphic.
  \end{enumerate}
\end{lemma}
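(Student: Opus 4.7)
The plan is to prove both parts by explicit calculation in the mapping cone model. Writing $T(h)_k = (C \otimes_R R[x,x^{-1}])_{k-1} \oplus (C \otimes_R R[x,x^{-1}])_k$ with differential $d_h(c,b) = (-\partial c,\, \partial b + h(c) - xc)$ (following the sign convention of the $\Tot$ construction), the induced self-map $h_*$ acts as $h \otimes 1$ on both summands, while ``multiplication by $x$'' acts as $1 \otimes x$ on both summands.

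For part~\ref{item:1}, I would define the degree-$(+1)$ map $s \colon T(h) \to T(h)$ by $s(c,b) = (b, 0)$, that is, the ``shift'' that reinterprets a target element of the defining map $h \otimes 1 - 1 \otimes x$ as a source element. A direct computation of $d_h s + s d_h$ then yields the diagonal map $(c,b) \mapsto (h(c) - xc,\, h(b) - xb)$, which is precisely $(h_* - x \cdot \id)(c,b)$; thus $s$ is the desired chain homotopy from $h_*$ to multiplication by $x$. Since $x$ acts invertibly on $R[x,x^{-1}]$, the map $x \cdot \id$ is an automorphism of $T(h)$, and $h_*$ is therefore a quasi-isomorphism.

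For part~\ref{item:2}, let $H$ be a chain homotopy with $\partial H + H \partial = h - g$. I would define $\Psi \colon T(h) \to T(g)$ by $\Psi(c, b) = (c, b + H(c))$, extended $R[x,x^{-1}]$-linearly. The identity $d_g \Psi = \Psi d_h$ translates, after cancellation of the common terms $-\partial c$ and $\partial b - xc$, directly into the defining equation for~$H$. The two-sided inverse is given by $(c, b) \mapsto (c, b - H(c))$, which is a chain map $T(g) \to T(h)$ by the symmetric calculation using $-H$ as a homotopy from $g$ to~$h$. No serious obstacle is expected beyond sign bookkeeping; the main thing to get right is the convention for the mapping cone differential, after which both statements reduce to one-line verifications with the explicit formulas above.
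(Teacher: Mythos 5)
Your proposal matches the paper's proof exactly: the homotopy $s(c,b) = (b,0)$ is precisely the paper's ``projection on the second summand followed by inclusion into the first summand'' $(\mathrm{pr}_2, 0)$, and the chain map $\Psi(c,b) = (c, b + H(c))$ is precisely the paper's matrix $\left(\begin{smallmatrix}\id & 0\\ A & \id\end{smallmatrix}\right)$ with its explicit inverse. Your sign conventions for the cone differential agree with the paper's, and the verifications you carry out (or sketch) are exactly the ``straightforward computations'' the paper leaves to the reader.
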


\begin{proof}
  (1) The homotopy is essentially given by projection on the second
  summand followed by inclusion into the first summand,
  \begin{multline*}
    T(h)_n = C_{n-1} \tensor_R R[x,x^{-1}] \, \oplus\,  C_n \tensor_R
    R[x,x^{-1}] \\
    \ \rTo[l>=4em]^{(\textrm{pr}_2, 0)} C_n \tensor_R
    R[x,x^{-1}] \, \oplus\, C_{n+1} \tensor_R R[x,x^{-1}] = T(h)_{n+1}\ .
  \end{multline*}
  The map~$x$ is an isomorphism, hence $h_*$ is a
  quasi-isomorphism.

  (2) Choose a chain homotopy $A \colon h \simeq g$ such that
  $\partial^CA + A \partial^C = h-g$, where $\partial^C$ is the
  differential of~$C$. Then it is easy to check by a straightforward
  computation that
  \begin{multline*}
    \begin{pmatrix}
      \id \tensor \id & 0 \\ A\tensor\id  & \id\tensor\id
    \end{pmatrix}
    \colon T(h)_n = C_{n-1} \tensor_R R[x,x^{-1}] \, \oplus\,  C_n \tensor_R
    R[x,x^{-1}] \\
    \rTo C_{n-1} \tensor_R R[x,x^{-1}] \, \oplus\,  C_n \tensor_R
    R[x,x^{-1}] = T(g)_n
  \end{multline*}
  defines a chain map with inverse given by the matrix
  $\left(\begin{smallmatrix} \id \tensor \id & 0 \\ -A\tensor\id &
      \id\tensor\id
  \end{smallmatrix}\right)$.
\end{proof}

\begin{proposition}[\textsc{Mather}'s mapping torus trick]
  \label{prop:weak_Mather}
  Suppose $f \colon C \rTo D $ and $g \colon D \rTo C$ are chain maps
  of $R$\nbd-module chain complexes. Then the two maps 
  \[f_* \colon T(gf) \rTo T(fg) \quad \text{and} \quad g_* \colon
  T(fg) \rTo T(gf)\] are quasi-isomorphisms. If both $C$ and~$D$ are
  bounded below complexes of projective modules, both maps are
  homotopy equivalences.
\end{proposition}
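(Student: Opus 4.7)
The plan is to use the functoriality of the mapping torus construction together with Lemma~\ref{lem:basic_prop}(\ref{item:1}). The map $f_* \colon T(gf) \rTo T(fg)$ is induced by the commutative square with top row $C \rTo^{gf} C$, bottom row $D \rTo^{fg} D$, and both vertical arrows equal to $f$; commutativity follows from $f \circ gf = fgf = fg \circ f$. Symmetrically, $g_*$ arises from the analogous square with verticals $g$. Stacking these two squares vertically (in either order) and invoking the compatibility of the construction with vertical composition recorded above, one obtains the \emph{equalities} of chain maps
\[g_* \circ f_* = (gf)_* \colon T(gf) \rTo T(gf) \quad \text{and} \quad f_* \circ g_* = (fg)_* \colon T(fg) \rTo T(fg)\ ,\]
where on the right the self-maps are those induced by the squares having all four arrows equal to $gf$ and $fg$ respectively.

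By Lemma~\ref{lem:basic_prop}(\ref{item:1}) each of $(gf)_*$ and $(fg)_*$ is chain homotopic to multiplication by the indeterminate~$x$ on the appropriate mapping torus, and is in particular a quasi-isomorphism. Passing to homology, the above identities show that $H_n(g_*) \circ H_n(f_*)$ and $H_n(f_*) \circ H_n(g_*)$ are isomorphisms for every~$n$; the first forces $H_n(f_*)$ injective and $H_n(g_*)$ surjective, the second reverses these roles, and together they imply that $H_n(f_*)$ and $H_n(g_*)$ are isomorphisms in every degree. Hence $f_*$ and $g_*$ are quasi-isomorphisms.

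For the final assertion, suppose $C$ and $D$ are bounded below complexes of projective $R$\nbd-modules. By the remark immediately after Definition~\ref{def:algebr-mapp-torus}, combined with the fact that $R[x,x^{-1}]$ is free, hence flat, over~$R$, the mapping tori $T(gf)$ and $T(fg)$ are bounded below complexes of projective $R[x,x^{-1}]$\nbd-modules. A quasi-isomorphism between bounded below complexes of projective modules is automatically a homotopy equivalence (the standard consequence of K\nbd-projectivity of such complexes, or equivalently of a direct obstruction-theoretic construction of a homotopy inverse level by level), so $f_*$ and $g_*$ are homotopy equivalences. I do not foresee a serious obstacle; the subtlest point is the identification $g_* \circ f_* = (gf)_*$, which must be read as an \emph{equality} of chain maps (not merely an identity up to homotopy), and this in turn rests on the purely formal functorial construction of $\alpha_*$ from a commutative square.
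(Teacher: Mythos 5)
Your proof is correct and is essentially the paper's own argument: the composite $g_*\circ f_*=(gf)_*$, by functoriality of the mapping torus under vertical stacking of squares, is a quasi-isomorphism by Lemma~\ref{lem:basic_prop}(\ref{item:1}), forcing $H_n(f_*)$ injective and $H_n(g_*)$ surjective, and symmetry finishes the claim. The only (harmless) additions are that you spell out the commutativity checks and the homology bookkeeping, and that you supply the standard argument for the homotopy-equivalence upgrade, which the paper states without proof.
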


\begin{proof}
  The composition $g_* \circ f_* = (gf)_* \colon T(gf) \rTo T(gf)$ is
  a quasi-isomorphism by part~(1) of the previous Lemma; consequently,
  $f_*$ induces an injective map on homology, and $g_*$ induces a
  surjective map on homology. Swapping the r\^oles of $f$ and~$g$
  proves the claim.
\end{proof}

\begin{lemma}
  \label{lem:C_is_Tx}
  Let $C$ be a chain complex of $R[x,x^{-1}]$\nbd-modules (possibly
  unbounded). Then there is an $R[x,x^{-1}]$\nbd-linear
  quasi-isomorphism $T(x) \rTo C$ where $x$~is short for the
  $R$\nbd-module chain self map of~$C$ given by ``multiplication
  by~$x$''. The quasi-isomorphism is natural in~$C$.
\end{lemma}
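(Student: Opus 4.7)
The plan is to identify $C$ with a cokernel and then invoke Lemma~\ref{lem:cone_coker}. First I would define the natural comparison map: since $C$ is already an $R[x,x^{-1}]$-module complex, the action map
\[\mu \colon C \tensor_R R[x,x^{-1}] \rTo C \ , \qquad c \tensor p(x) \mapsto c \cdot p(x) \ ,\]
is a surjective $R[x,x^{-1}]$\nbd-linear chain map. A direct check shows $\mu \circ (x \tensor 1 - 1 \tensor x) = 0$, so there is an induced surjection $\bar\mu \colon \coker (x \tensor 1 - 1 \tensor x) \rTo C$.

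Second, I would verify that $x \tensor 1 - 1 \tensor x$ is injective, which is where the hypothesis that $x$ acts invertibly on $C$ is crucial. Given $\xi = \sum_i c_i \tensor x^i \in C \tensor_R R[x,x^{-1}]$ (finite sum) mapped to zero, expanding and comparing coefficients of $x^i$ gives $c_i \cdot x = c_{i-1}$ for all~$i$. Let $i_0$ be minimal with $c_{i_0} \neq 0$; then $c_{i_0} \cdot x = c_{i_0 - 1} = 0$, but multiplication by~$x$ is an automorphism of the underlying module, forcing $c_{i_0} = 0$, a contradiction. With injectivity in hand, I would check that $\bar\mu$ is an isomorphism: the relation $c \tensor x^{k+1} \equiv cx \tensor x^k$ modulo the image allows one to rewrite any element as $c' \tensor 1$, so the cokernel is identified with $C \tensor 1 \iso C$ via~$\mu$.

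Combining these two steps gives a short exact sequence of $R[x,x^{-1}]$\nbd-module chain complexes
\[ 0 \rTo C \tensor_R R[x,x^{-1}] \rTo^{x \tensor 1 - 1 \tensor x} C \tensor_R R[x,x^{-1}] \rTo^{\mu} C \rTo 0 \ . \]
By Lemma~\ref{lem:cone_coker} applied to the injective map $x \tensor 1 - 1 \tensor x$, the canonical map from the mapping cone to the cokernel,
\[ T(x) = \mathrm{Cone}\,(x \tensor 1 - 1 \tensor x) \rTo \coker (x \tensor 1 - 1 \tensor x) \iso C \ ,\]
is a quasi-isomorphism, giving the desired map. Naturality in~$C$ is immediate: a chain map $\phi \colon C \rTo C'$ of $R[x,x^{-1}]$\nbd-module complexes commutes with the action maps $\mu$ and~$\mu'$, and the mapping cone construction is functorial, so the resulting square $T(x) \rTo C$, $T(x') \rTo C'$ commutes.

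The only genuine content is the injectivity of $x \tensor 1 - 1 \tensor x$, but once one notices that this map is essentially the standard resolution of an $R[x,x^{-1}]$\nbd-module by free modules induced along $R \subset R[x,x^{-1}]$, the argument above is routine; everything else is formal bookkeeping.
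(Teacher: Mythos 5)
Your proposal is correct and follows essentially the same route as the paper: both produce the levelwise short exact sequence $0 \to C \otimes_R R[x,x^{-1}] \xrightarrow{x \otimes 1 - 1 \otimes x} C \otimes_R R[x,x^{-1}] \xrightarrow{\mu} C \to 0$ and then pass from the mapping cone to the cokernel via Lemma~\ref{lem:cone_coker}. The paper's verification of middle exactness, via the $R$\nbd-linear section $m \mapsto m \otimes 1$ of $\mu$ and explicit preimage formulae, is the same computation you phrase as the rewrite $c \otimes x^{k+1} \equiv cx \otimes x^k$ modulo the image; as a small aside, injectivity of $x \otimes 1 - 1 \otimes x$ does not in fact require invertibility of the $x$\nbd-action (look at the maximal rather than the minimal nonzero index), so that part of your argument is slightly more restrictive than necessary, but harmlessly so.
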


\begin{proof}
  First we claim that for any $R[x,x^{-1}]$\nbd-module~$M$ there is an
  exact sequence of $R[x,x^{-1}]$\nbd-modules
  \begin{equation}
    \label{eq:ses}
    0 \rTo M \tensor_R R[x,x^{-1}] \rTo^{x \tensor 1 - 1 \tensor x} M \tensor_R R[x,x^{-1}] \rTo^\epsilon M \rTo 0 \ .
  \end{equation}
  Here the map denoted~$\epsilon$ is given by $m \tensor p \mapsto
  mp$. To begin with, $x \tensor 1 - 1 \tensor x$ is injective and
  $\epsilon$ is surjective, so it remains to prove exactness in the
  middle. First,
  \[\epsilon \circ (x \tensor 1 - 1 \tensor x)(m \tensor p) =
  \epsilon(mx \tensor p - m \tensor px) = mxp - mpx = 0\] since $x$~is
  in the centre of~$R[x,x^{-1}]$. This shows $\mathrm{Im}\, (x \tensor
  1 - 1 \tensor x) \subseteq \ker \epsilon$. We will prove the
  converse inclusion in a slightly indirect manner. We can consider
  the sequence~\eqref{eq:ses} as a sequence of $R$\nbd-modules and
  check exactness in the middle in the category of
  $R$\nbd-modules. The point is that $\epsilon$~has an $R$\nbd-linear
  section~$\sigma$ given by $m \mapsto m \tensor 1$. Consequently,
  there is an isomorphism $M \tensor_R R[x,x^{-1}] \iso \ker \epsilon
  \oplus \mathrm{Im}\, \sigma$ of $R$\nbd-modules, and every element
  in $\ker \epsilon$ is of the form $m - \sigma\epsilon m$, for some
  $m \in M \tensor_R R[x,x^{-1}]$. We can write~$m$ uniquely as a
  finite sum of the form $m = \sum_{k \in \bZ} m_k \tensor x^k$ with
  certain $m_k \in M$ (almost all of which are zero); the associated
  element in~$\ker \epsilon$ is
  \[m - \sigma\epsilon (m) = \sum_{k \in \bZ} m_k \tensor x^k -
  \sum_{k \in \bZ} m_kx^k \tensor 1 \ .\] We want to demonstrate that
  this is in the image of $x \tensor 1 - 1 \tensor x$; it is certainly
  enough to prove this for each individual summand $b_k = m_k \tensor
  x^k - m_kx^k \tensor 1$. This is trivial for $k=0$ as $b_0 = 0$. For
  $k > 0$ we obtain $b_k$ as the image of
  \[-\big( m_k x^{k-1} \tensor 1 + m_k x^{k-2} \tensor x + \ldots +
  m_k \tensor x^{k-1} \big)\] under the map $x \tensor 1 - 1 \tensor
  x$; similarly, $b_{-k}$ is the image of
  \[m_{-k} x^{-k} \tensor x^{-1} + m_{-k} x^{-(k-1)} \tensor x^{-2} +
  \ldots + m_{-k} x^{-1} \tensor x^{-k}\] under the same map. This
  proves exactness of~\eqref{eq:ses}.

  Applying this result in each chain level proves that we have a
  similar exact sequence with $M$~replaced by the chain
  complex~$C$. It follows from Lemma~\ref{lem:cone_coker} that the
  canonical map $\mathrm{Cone}\, (x \tensor 1 - 1 \tensor x) \rTo C$
  is a quasi-isomorphism.
\end{proof}

\section{Proof of Theorem~\ref{thm:main}}

\eqref{item:fd} $\Rightarrow$ \eqref{item:hf} Suppose $C$ is
$R$\nbd-finitely dominated. We can then find a strict $R$\nbd-perfect
complex~$D$ of $R$\nbd-modules, together with mutually inverse
$R$\nbd-linear chain homotopy equivalences $f \colon C \rTo D$ and $g
\colon D \rTo C$. Let $x$~denote the $R$\nbd-linear self map of~$C$
given by ``multiplication by~$x$'', as before. Since the maps $x$ and
$xgf$ are homotopic, there is an isomorphism of
$R[x,x^{-1}]$\nbd-module complexes $T(xgf) \iso T(x)$,
cf.~Lemma~\ref{lem:basic_prop}~(2).  By \textsc{Mather}'s mapping
torus trick Proposition~\ref{prop:weak_Mather} there is an
$R[x,x^{-1}]$\nbd-linear quasi-isomorphism $f_* \colon T(xgf) \rTo
T(fxg)$. Finally, there is a quasi-isomorphism $T(x) \rTo C$, by
Lemma~\ref{lem:C_is_Tx}. We thus have quasi-isomorphisms
\[C \lTo T(x) \lTo T(xgf) \rTo T(fxg) \ .\] Now the chain complex
$T(fxg)$ is strict perfect over~$R_1 = R[x,x^{-1}]$ since $D$~is
strict perfect over~$R$; in addition, its finiteness obstruction is
trivial by Proposition~\ref{cone-no-obstruction}, applied to the
defining mapping cone of the mapping torus, so that $T(fxg)$ is
homotopy equivalent to a bounded complex of finitely generated free
$R[x,x^{-1}]$\nbd-modules.  Moreover, all other chain complexes are
bounded below and consist of projective $R[x,x^{-1}]$\nbd-modules,
hence the quasi-isomorphisms are in fact homotopy equivalences. It
follows that $C$~is homotopy $R_1$\nbd-finite.

We can iterate the argument, replacing $R$~by $R_k$ and $R_1$ by
$R_{k+1}$, proving that $C$ is indeed homotopy $R_j$\nbd-finite for $1
\leq j \leq n$.

This argument works for any re-numbering of the variables in precisely
the same way. We have thus shown that condition~\eqref{item:hf} holds.

\smallskip

\eqref{item:hf} $\Rightarrow$ \eqref{item:acyclicall} For $1 \leq j
\leq n$ there is a bounded complex $D^j$ of finitely generated free
$R_j$\nbd-modules which is homotopy equivalent (over~$R_j$) to~$C$, by
hypothesis. It follows that there are homotopy equivalences
\begin{equation}
  \label{eq:acyclic}
  \begin{aligned}
  C \tensor_{R_j} R_{j-1} ((x_j)) & \simeq D^j \tensor_{R_j} R_{j-1}
  ((x_j)) \qquad \text{and} \\ 
  C \tensor_{R_j} R_{j-1} ((x_j^{-1}))
  & \simeq D^j \tensor_{R_j} R_{j-1} ((x_j^{-1})) \ .
\end{aligned}
\end{equation}
Now we can apply \textsc{Ranicki}'s Theorem~\ref{thm:ranicki}
iteratively to the chain complexes~$D^j$, $1 \leq j \leq n$,
noting that by the previous step (or the hypothesis, for $j=1$) we
know $D^j$ to be $R_{j-1}$\nbd-finitely dominated. It follows that the
chain complexes in~\eqref{eq:acyclic} are acyclic as claimed.

\smallskip

\eqref{item:acyclicall} $\Rightarrow$ \eqref{item:acyclicfixed} is trivial.

\smallskip

\eqref{item:acyclicfixed} $\Rightarrow$ \eqref{item:fd} First we may
assume that $C$~itself is a strict $R_n$\nbd-perfect chain
complex. Since a finitely generated projective module is a direct
summand of a finitely generated free one, there exists a strict
$R_n$\nbd-perfect complex $C^\prime \in \Chb(R_n)$ with trivial
differentials such that $D = C \oplus C^\prime$ consists of finitely
generated free modules.

By algebraic transversality \cite[Proposition~1]{Ranicki-findom} there
exist chain complexes
\[D^+ \in \Chb \big(R_{n-1}[x_n]\big) \ , \quad D^- \in
\Chb\big(R_{n-1}[x_n^{-1}]\big) \quad \text{and} \quad L \in
\Chb(R_{n-1})\] consisting of finitely generated free modules over
their respective rings, together with chain maps forming a
short exact sequence
\begin{equation}
  \label{eq:ses_algtrans}
  0 \rTo L \rTo D^+ \oplus D^- \rTo^{f^+ - f^-} D \rTo 0
\end{equation}
of $R_{n-1}$\nbd-module chain complexes, such that the adjoint maps
\[D^+ \tensor_{R_{n-1}[x_n]} R_n \rTo D \quad \text{and} \quad D^-
\tensor_{R_{n-1}[x_n^{-1}]} R_n \rTo D\] are isomorphisms of
$R_n$\nbd-module chain complexes.

Before going any further we introduce a new piece of notation. Given a
diagram of chain complexes of modules
\[\mathcal{Z} = \ \big( Z^- \rTo^{g^-} Z \lTo^{g^+} Z^+ \big)\]
we define $\Gamma (\mathcal{Z})$ by the rule
\[\Gamma (\mathcal{Z}) = \mathrm{Cone}\, (Z^+ \oplus Z^-
\rTo[l>=3em]^{g^+ - g^-} Z)[-1] \ .\] If all the complexes $Z$, $Z^+$
and~$Z^-$ are concentrated in degree~$0$ then $\Gamma(\mathcal{Z})$
computes derived inverse limits as $H_{-k} \Gamma (\mathcal{Z}) =
\lim{}^k (\mathcal{Z})$; in general, the homology modules
of~$\Gamma(\mathcal{Z})$ should be thought of as hyper-derived inverse
limits. --- Straight from the definition we see that $\Gamma(Z \rTo 0
\lTo 0) = \Gamma (0 \rTo 0 \lTo Z) = Z$. In addition, from the
properties of mapping cones it is clear that a commutative diagram
\begin{diagram}
  Z^- & \rTo & Z & \lTo & Z^+ \\
  \dTo<\simeq && \dTo<\simeq && \dTo<\simeq \\
  Y^- & \rTo & Y & \lTo & Y^+
\end{diagram}
with vertical morphisms all quasi-isomorphisms induces a
quasi-isomorphism
\[\Gamma( Z^-  \rTo  Z  \lTo  Z^+ ) \rTo^\simeq \Gamma( Y^-  \rTo  Y
\lTo  Y^+ ) \ .\]

We return to the actual proof. By Corollary~\ref{cor:ker_cone} the
sequence~\eqref{eq:ses_algtrans} yields a quasi-isomorphism
\begin{equation}
  \begin{split}
    \label{eq:left_finite}
    L \rTo^\simeq \mathrm{Cone}\, \big( D^+ \oplus D^- \rTo^{f^+ - f^-}
    D \big)[-1] \ \ \\ = \Gamma (D^- \rTo D \lTo D^+)
  \end{split}
\end{equation}
which is actually a homotopy equivalence since all constituent chain
complexes consist of projective $R_{n-1}$\nbd-modules.

We will now replace the right-hand side of~\eqref{eq:left_finite} by a
quasi-isomorphic complex which contains the chain complex~$C$ as a
direct summand up to homotopy, thereby proving that $C$~is
$R_{n-1}$\nbd-finitely dominated.  We have a short exact sequence of
$R_{n-1}[x_n]$\nbd-modules {\small
  \[0 \rTo R_{n-1}[x_n] \rTo^{(+,+)} R_{n-1}[[x_n]] \oplus
  R_{n-1}[x_n, x_n^{-1}] \rTo^{(+,-)} R_{n-1} ((x_n)) \rTo 0 \
  ;\]}\relax we thus get, by taking tensor product over~$R_{n-1}[x_n]$
with~$D^+$, a short exact sequence of chain complexes
\[0 \rTo D^+ \rTo^{(+,+)} D^+[[x_n]] \oplus D^+ [x_n, x_n^{-1}]
\rTo^{(+,-)} D^+ ((x_n)) \rTo 0 \ .\] Here we have used the following
abbreviations:
\begin{align*}
  D^+ [[x_n]] &= D^+ \tensor_{R_{n-1}[x_n]} R_{n-1}[[x_n]] \\
  D^+ ((x_n)) &= D^+ \tensor_{R_{n-1}[x_n]} R_{n-1}((x_n)) \\
  D^+[x_n,\, x_n^{-1}] &= D^+ \tensor_{R_{n-1} [x_n]} R_{n-1} [x_n,\, x_n^{-1}] = D^+ \tensor_{R_{n-1}[x_n]} R_n
\end{align*}
Invocation of Corollary~\ref{cor:ker_cone} gives us a
quasi-isomorphism
\begin{equation}
  \label{eq:h1}
  D^+ \rTo^\simeq \Gamma \big( D^+ [x_n, x_n^{-1}] \rTo D^+ ((x_n))
  \lTo D^+ [[x_n]] \big) \ .
\end{equation}

Recall that by construction of~$D^+$ we have isomorphisms $D^+
[x_n,x_n^{-1}] \iso D$ and
\begin{align*}
  D^+ ((x_n)) & \iso D^+ \tensor_{R_{n-1}[x_n]}
  R_{n-1} [x_n, x_n^{-1}] \tensor_{R_{n-1}[x_n, x_n^{-1}]} R_{n-1}
  ((x_n)) \\
  & \iso D \tensor_{R_n} R_{n-1} ((x_n))
\end{align*}
so that~\eqref{eq:h1} becomes the quasi-isomorphism
\[D^+ \rTo^\simeq_{g^+} H^+ := \Gamma \big( D \rTo D \tensor_{R_n}
R_{n-1} ((x_n)) \lTo D^+ [[x_n]] \big) \ .\]
Similarly, by exchanging~$x_n$ and $x_n^{-1}$ we obtain a
quasi-isomorphism
\[D^- \rTo^\simeq_{g^-} H^- := \Gamma \big( D \rTo D \tensor_{R_n}
R_{n-1} ((x_n^{-1})) \lTo D^- [[x_n^{-1}]] \big) \ ,\]
where we have used the notation
\[D^- [[x_n^{-1}]] = D^- \tensor_{R_{n-1}[x_n^{-1}]}
R_{n-1}[[x_n^{-1}]] \ .\]

We have an obvious commutative diagram
\begin{diagram}
  D & \rTo[l>=3em] & D \tensor_{R_n} R_{n-1} ((x_n)) & \lTo[l>=3em] & D^+ [[x_n]] \\
  \dTo && \dTo && \dTo \\
  D & \rTo & 0 & \lTo & 0
\end{diagram}
which upon application of the functor~$\Gamma$ results in a chain
complex map $h^+ \colon H^+ \rTo D$. A similar construction yields the
map $h^- \colon H^- \rTo D$, and these maps fit into another
commutative diagram of chain complexes
\begin{diagram}
  D^- & \rTo^{f^-} & D & \lTo^{f^+} & D^+ \\
  \dTo<\simeq>{g^-} && \dTo<\simeq>\id && \dTo<\simeq>{g^+} \\
  H^- & \rTo^{h^-} & D & \lTo^{h^+} & H^+
\end{diagram}
which results in a quasi-isomorphism
\begin{equation}
  \label{eq:results_qiso}
  \Gamma \big( D^- \rTo^{f^-} D \lTo^{f^+} D^+ \big) \rTo^\simeq
  \Gamma \big( H^- \rTo^{h^-} D \lTo^{h^+} H^+ \big) \ .
\end{equation}

Recall that $D$ splits as $D = C \oplus C^\prime$, and that
consequently the tensor product $ D \tensor_{R_n}
R_{n-1} ((x_n))$ splits as a direct sum of
\[ C \tensor_{R_n} R_{n-1} ((x_n)) \quad \text{and} \quad C^\prime
\tensor_{R_n} R_{n-1} ((x_n)) \ .\] The former summand is acyclic by
our hypothesis \eqref{item:acyclicfixed} (for $j = n$) so that all
vertical maps in the following commutative diagram are
quasi-isomorphisms:{\small
\begin{diagram}[labelstyle=\scriptstyle,loose,w=0pt]
  C \oplus C^\prime & \rTo[l>=3em] & C \tensor_{R_n} R_{n-1} ((x_n&))
  & \oplus \, & C^\prime \tensor_{R_n} R_{n-1} ((x_n)) & \lTo[l>=3em] & D^+ [[x_n]] \\
  \dTo<\simeq>{(\id,\id)} &&&& \dTo<\simeq>{(0,\id)} &&& \dTo<\simeq>\id \\
  C \oplus C^\prime & \rTo[l>=3em] && 0
  & \oplus \, & C^\prime \tensor_{R_n} R_{n-1} ((x_n)) & \lTo[l>=3em] & D^+ [[x_n]]
\end{diagram}}%
That is, by applying the functor $\Gamma$ we obtain a
quasi-isomorphism from~$H^+$ to
\[K^+ := \Gamma \big( C \oplus C^\prime \rTo[l>=3em]^{(0,+)} C^\prime
\tensor_{R_n} R_{n-1} ((x_n)) \lTo[l>=3em] D^+ [[x_n]] \big) \ ,\]
with ``$+$'' indicating the natural map from~$C^\prime$ into the
tensor product. Moreover, the map $h^+ \colon H^+ \rTo D$ factors
through this new map $H^+ \rTo K^+$.
Similarly, $H^-$~is quasi-isomorphic to
\[K^- := \Gamma \big( C \oplus C^\prime \rTo[l>=3.5em]^{(0,+)}
C^\prime \tensor_{R_n} R_{n-1} ((x_n^{-1})) \lTo D^- [[x_n^{-1}]]
\big) \ .\] 

We thus obtain a commutative diagram with vertical quasi-isomorphisms
\begin{diagram}
  H^- & \rTo^{h^-} & D & \lTo^{h^+} & H^+ \\
  \dTo<\simeq && \dTo<\simeq && \dTo<\simeq \\
  K^- & \rTo & C \oplus C^\prime & \lTo & K^+
\end{diagram}
resulting in a quasi-isomorphism from the target
of~\eqref{eq:results_qiso} to
\begin{equation}
  \label{eq:final}
  \Gamma \big( K^- \rTo C \oplus C^\prime \lTo K^+ \big) \ .
\end{equation}
But by direct inspection this last complex contains
\[\Gamma (C \rTo C \lTo C) = \mathrm{Cone}\, \big( C \oplus C
\rTo[l>=3em]^{(+,-)} C \big)[-1] \simeq C\] as a direct summand.
This becomes clear when writing out the definitions of the
constituents of~\eqref{eq:final}, see Fig.~\ref{fig:fig}.
\begin{figure}[!t]
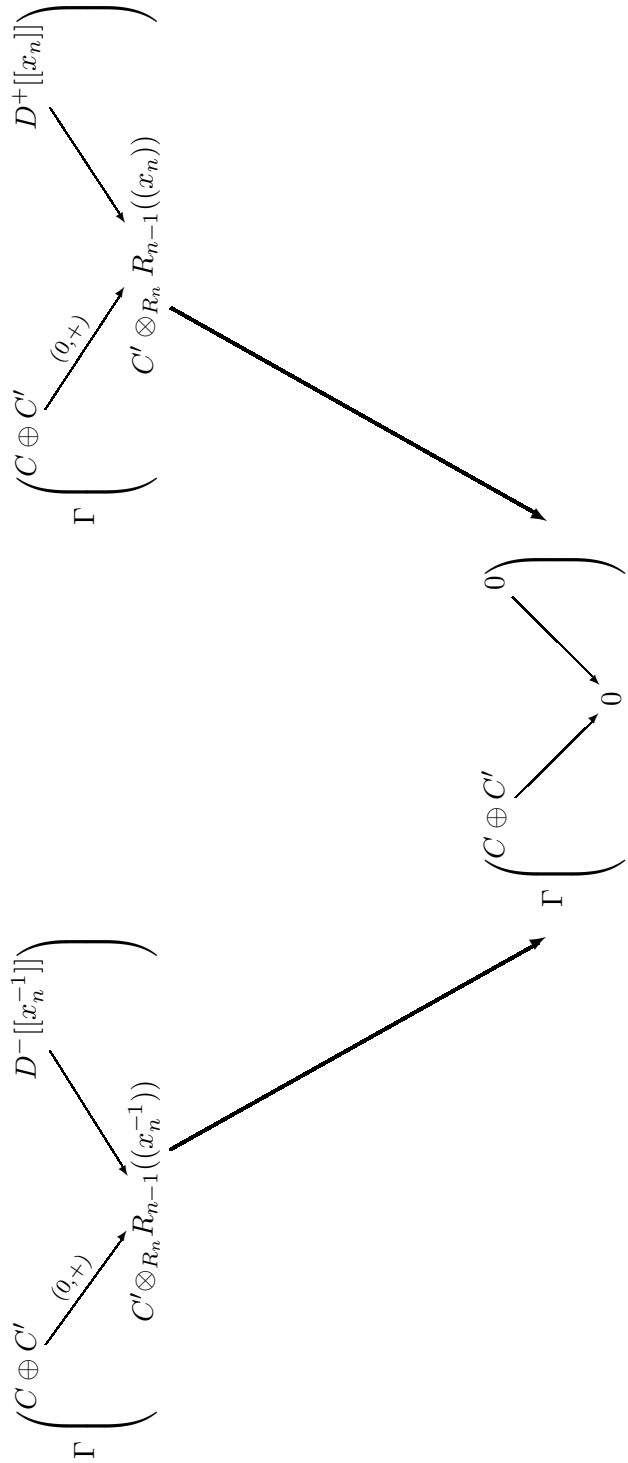

  \begin{diagram}[PS,h=8em,w=7em,landscape,thick]
    \Gamma \left(
      \begin{diagram}[small,objectstyle=\textstyle,labelstyle=\scriptstyle,thin]
        C \oplus C^\prime &&&&  D^- [[x_n^{-1}]] \\
        & \rdTo>{(0,+)} && \ldTo \\
        & C^\prime \tensor_{R_n} & R_{n-1} & ((x_n^{-1})) &
      \end{diagram}
    \right) &&&& \Gamma \left(
      \begin{diagram}[small,objectstyle=\textstyle,labelstyle=\scriptstyle,thin]
        C \oplus C^\prime &&&& D^+ [[x_n]] \\
        & \rdTo>{(0,+)} && \ldTo \\
        && C^\prime \tensor_{R_n} R_{n-1} ((x_n)) &&
      \end{diagram}
    \right) \\
    & \rdTo(1,2) && \ldTo(1,2) & \\
    && \Gamma \left(
      \begin{diagram}[small,objectstyle=\textstyle,labelstyle=\scriptstyle,thin]
        C \oplus C^\prime &&&& 0 \\
        & \rdTo && \ldTo & \\
        && 0 &&
      \end{diagram}
    \right) &&
  \end{diagram}%
  \caption{An expanded version of~\eqref{eq:final}. The thick diagonal
  maps are induced by the obvious maps of small diagrams.}
  \label{fig:fig}
\end{figure}
Indeed, the two ``outer'' summands~$C$ only map non-trivially to the
``inner'' summand~$C$, with the latter not receiving any other
non-trivial map, so that $\Gamma (C \rTo C \lTo C)$ appears as a
direct summand once $\Gamma$ is applied to the diagram in
Fig.~\ref{fig:fig}.

It follows that $C$~is homotopy equivalent to a summand of the chain
complex~\eqref{eq:final} which is quasi-isomorphic,
via~\eqref{eq:results_qiso} and~\eqref{eq:left_finite}, to the finite
complex~$L$ of $R_{n-1}$\nbd-modules. Consequently~$C$, considered as
an $R_{n-1}$\nbd-module complex, is a retract up to homotopy of the
chain complex~$L$. Indeed, the complex~\eqref{eq:final} can be
replaced, up to quasi-isomorphism, by a bounded below complex of
projective $R_{n-1}$\nbd-modules which is quasi-isomorphic, and hence
chain homotopy equivalent, to~$L$. Using the fact that $C$~is a
bounded below complex of projective $R_{n-1}$\nbd-modules as well it
is then standard homological algebra to construct the desired maps of
complexes $\alpha \colon C \rTo L$ and $\beta \colon L \rTo C$
together with a chain homotopy $\beta \alpha \simeq \id$. It now
follows from \cite[Proposition~3.2~(ii)]{Ranicki:algfin} that $C$~is
$R_{n-1}$\nbd-finitely dominated.

In case $n=1$ this finishes the proof of \eqref{item:acyclicfixed}
$\Rightarrow$ \eqref{item:fd}.  For $n>1$ we observe that $C$ is now
homotopy equivalent over $R_{n-1}$ to a strict perfect complex $B \in
\Chb(R_{n-1})$ which satisfies condition~\eqref{item:acyclicfixed} of
the Theorem for $j<n$.  By induction, $B$~is $R$\nbd-finitely
dominated, hence so is~$C$. --- This finishes the proof for
general~$n$.

\section{A non-trivial finitely dominated chain complex}

We will now discuss a generalisation of a one\nbd-variable example
given by \textsc{Hughes} and \textsc{Ranicki}
\cite[Example~23.19]{Hughes-Ranicki}. This serves to illustrate the
existence of non-trivial finitely dominated chain complexes.

Let $R$ be a commutative integral domain, and write $R_n$ for the
\textsc{Laurent} polynomial ring in indeterminates $x_1,\, x_2,\,
\cdots,\, x_n$ as before. We actually restrict to the case $n=2$,
leaving the easy generalisation for higher~$n$ to the reader.
Consider the following square diagram:
\begin{diagram}[LaTeXeqno]
\label{diag:sq2}
  R_2 & \rTo^{1-x_1x_2} & R_2 \\ \dTo<{1-x_1} && \dTo<{1-x_1} \\
  R_2 & \rTo^{1-x_1x_2} & R_2
\end{diagram}
Let $h \colon D \rTo D$ denote the chain complex obtained by taking
mapping cones in vertical direction, and let $C$ be the mapping cone
of~$h$.

Clearly the complex $C$~is not acyclic; indeed, the element $x_2 \in
R_2$~represents a non-trivial element in the bottom homology
of~$C$. However, we claim that the four chain complexes
\begin{align*}
  C \tensor_{R_1} R((x_1)) & \quad \text{and} \quad C \tensor_{R_1} R((x_1^{-1})) \ , \\
  C \tensor_{R_2} R_1((x_2)) & \quad \text{and} \quad C \tensor_{R_2} R_1 ((x_2^{-1})) 
\end{align*}
are all acyclic. This can be seen as follows: First, the vertical maps
in the square~\eqref{diag:sq2} become isomorphisms after tensoring
over~$R_1$ with $R_1((x_1))$ as $1-x_1$ is a unit in the latter
ring. Consequently, by tensoring and taking mapping cones in vertical
directions we obtain a map of acyclic chain complexes
\[\mathrm{Cone}\, \big( R_2 \rTo[l>=3em]^{1-x_1} R_2 \big) \rTo \mathrm{Cone}\,
\big( R_2 \rTo[l>=3em]^{1-x_1} R_2 \big)\] whose mapping cone~$K$ is
acyclic as well. But formation of mapping cones is compatible with
taking tensor products so that there is an isomorphism $K \iso C
\tensor_{R_1} R((x_1))$. Consequently the latter chain complex is
acyclic. The same argument with the roles of $x_1$ and $x_1^{-1}$
reversed proves that $C \tensor_{R_1} ((x_1^{-1}))$ is acyclic as
well. --- Tensoring the square diagram~\eqref{diag:sq2} over~$R_2$
with $R_1((x_2))$ and taking mapping cones in vertical directions
results in a chain complex map $g \colon E \rTo E$ whose mapping
cone~$J$ is isomorphic to $C \tensor_{R_2} R_1((x_2))$. Now as a map
of graded modules (\ie, disregarding differentials) the map~$g$ is
given by the map
\begin{multline*}
  R_2[1] \tensor_{R_2} R_1((x_1)) \,\oplus\, R_2 \tensor_{R_2}
  R_1((x_1)) \\ \rTo R_2[1] \tensor_{R_2} R_1((x_1)) \,\oplus\, R_2
  \tensor_{R_2} R_1((x_1))
\end{multline*}
induced by multiplication by $1-x_1x_2$. But this polynomial is a unit
in the ring~$R_1((x_2))$ (as $x_1$~is a unit in~$R_1$) so that $g$~is
in fact an isomorphism of chain complexes. It follows that $C
\tensor_{R_2} R_1((x_2)) \iso \mathrm{Cone}\, (g)$ is acyclic. By
exchanging $x_2$ and~$x_2^{-1}$ we see that $C \tensor_{R_2}
R_1((x_2^{-1}))$ is acyclic as well.

By Theorem~\ref{thm:main} this shows that the complex~$C$ is
$R$-finitely dominated. The Theorem also says that the chain complexes
\[C \tensor_{R[x_2, x_2^{-1}]} R((x_2)) \quad \text{and} \quad C
\tensor_{R_2} R[x_2,\, x_2^{-1}] ((x_1))\] are acyclic, but note that
this cannot be proved as easily as above ({\it viz.}, by showing that
the horizontal or vertical maps of~\eqref{diag:sq2} become
isomorphisms after application of a tensor product functor). It
appears that the freedom to re-number the variables is relevant for
detecting finite domination in practise.

\section{Finite domination over fields}

We finish the paper by discussing finite domination over fields which
is (not surprisingly) much simpler than the general case. Suppose $F$
is a field, and $C$ is a bounded chain complex of finitely generated
projective modules over the \textsc{Laurent} ring
\[L = F[z_1,\, z_1^{-1},\, z_2,\, z_2^{-1},\, \cdots,\, z_n,\,
z_n^{-1}] \ .\] Since $F$~is a field, $C$~is $F$-finitely dominated if
and only if $\dim_F H_k C < \infty$ for all~$k$. (See
\cite[Theorem~1.7.13]{Rosenberg} for a proof covering the more general
situation of a \textsc{noether}ian ground ring. Since there is no
difference between free and projective $F$\nbd-modules, $C$~is
$F$\nbd-finitely dominated if and only if $C$~is $F$\nbd-homotopy
finite.) We obtain the following multi-variable version of \cite[\S5,
Example]{Ranicki-findom}:

\begin{theorem}
  \label{thm:field}
  The complex~$C$ is $F$-finitely dominated if and only if the induced
  chain complexes
  \[C \tensor_{F[z_j, z_j^{-1}]} F(z_j) \ , \quad j = 1,\, 2,\,
  \cdots,\, n \ ,\] are acyclic. (Here $F(z_j)$ denotes the field of
  rational functions in~$z_j$.)
\end{theorem}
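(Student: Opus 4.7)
The plan is to use the characterisation (stated just before the theorem) that a bounded complex $C$ of finitely generated projective $L$-modules is $F$-finitely dominated if and only if $\dim_F H_k(C) < \infty$ for every $k$. Since $F(z_j)$ is a localisation of $F[z_j, z_j^{-1}]$, it is flat, so there is a natural isomorphism
\[ H_k\big( C \tensor_{F[z_j, z_j^{-1}]} F(z_j) \big) \iso H_k(C) \tensor_{F[z_j, z_j^{-1}]} F(z_j) \ . \]
The theorem therefore reduces to the assertion that, for each $k$, the $L$-module $H_k(C)$ is finite-dimensional over $F$ if and only if it becomes zero after localising at $F(z_j)$ for every $j = 1, 2, \ldots, n$. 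Throughout I will use that each $H_k(C)$ is a finitely generated $L$-module, $L$ being \textsc{noether}ian.

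For the forward direction I would argue that if $\dim_F H_k(C) < \infty$, then multiplication by $z_j$ on $H_k(C)$ is an invertible $F$-linear endomorphism of a finite-dimensional space (invertible because $z_j$ is a unit of $L$). Its characteristic polynomial $\chi_j(t) \in F[t]$ therefore has non-zero constant term, so $\chi_j(z_j) \in F[z_j, z_j^{-1}]$ is a non-zero element annihilating $H_k(C)$ by the \textsc{Cayley}--\textsc{Hamilton} theorem. Being non-zero, $\chi_j(z_j)$ becomes a unit in $F(z_j)$, forcing $H_k(C) \tensor_{F[z_j, z_j^{-1}]} F(z_j) = 0$.

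For the converse, assume $H_k(C)$ vanishes after localising at each $F(z_j)$; then every element of $H_k(C)$ is annihilated by some non-zero element of $F[z_j, z_j^{-1}]$. Fixing a finite generating set $m_1, \ldots, m_r$ of $H_k(C)$ as an $L$-module, I would pick for each pair $(i,j)$ a non-zero Laurent polynomial $p_{ij}(z_j)$ annihilating $m_i$ and form the product $p_j := \prod_i p_{ij} \in F[z_j, z_j^{-1}]$. Since $F[z_j, z_j^{-1}]$ lies in the centre of $L$, this $p_j$ annihilates each generator and hence the whole of $H_k(C)$. Consequently $H_k(C)$ is a module over the quotient
\[ A := L / \big(p_1(z_1), p_2(z_2), \ldots, p_n(z_n)\big) \iso \bigotimes_{j=1}^n F[z_j, z_j^{-1}]/\big(p_j(z_j)\big) \ , \]
with tensor product over $F$. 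Each factor is finite-dimensional over $F$ (because $F[z_j, z_j^{-1}]$ is a principal ideal domain and $p_j \neq 0$), so $A$ is a finite-dimensional $F$-algebra; as $H_k(C)$ is finitely generated over $L$ it is finitely generated over $A$, hence finite-dimensional over $F$. The only step requiring attention is the passage from the pointwise statement ``every element of $H_k(C)$ has some non-zero annihilator in $F[z_j, z_j^{-1}]$'' to the uniform statement ``one non-zero element of $F[z_j, z_j^{-1}]$ annihilates all of $H_k(C)$''; this is the main obstacle and is resolved by invoking finite generation of $H_k(C)$ over the Noetherian ring $L$.
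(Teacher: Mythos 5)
Your proof is correct and takes essentially the same approach as the paper: both reduce via exactness of localisation to showing that $\dim_F H_k(C) < \infty$ is equivalent to $H_k(C) \tensor_{F[z_j,z_j^{-1}]} F(z_j) = 0$ for all~$j$, use \textsc{Cayley}--\textsc{Hamilton} for the forward direction, and use \textsc{Noether}ianity of~$L$ to extract a uniform annihilating Laurent polynomial for the converse. The only difference is cosmetic: where you invoke the standard fact that a vanishing localisation makes every element torsion, the paper derives this by an explicit elementwise computation in the tensor product.
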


\begin{proof}
  Suppose first that $C$ is finitely dominated. 
  For fixed~$k$ and~$j$, the multiplication action of~$z_j$ on~$C$
  determines an endomorphism $f_j$ of the finite-dimensional
  $F$-vector space $H_k (C)$. Its characteristic polynomial $p_j (x) =
  \det (f_j - x \cdot \id)$ satisfies $p_j (f_j) = 0$, by
  \textsc{Cayley}-\textsc{Hamilton}. Note that as a self map of
  $H_k(C)$, the action of $p_j(f_j)$ coincides with the one given by
  multiplication with the polynomial $p_j(z_j)$. For any primitive
  tensor $a \tensor b \in H_k(C) \tensor_{F[z_j, z_j^{-1}]} F(z_j)$ we
  have the chain of equalities
  \[a \tensor b = a \tensor (p_j \cdot b/p_j) = (a \cdot p_j) \tensor
  (b/p_j) = 0 \tensor (b/p_j) = 0\] so that $H_k (C) \tensor_{F[z_j,
    z_j^{-1}]} F(z_j) = 0$. But $F(z_j)$ is a localisation of
  $F[z_j,\, z_j^{-1}]$ ({\it viz.}, its quotient field), whence we
  have an isomorphism
  \[H_k \big(C \tensor_{F[z_j, z_j^{-1}]} F(z_j)\big) \cong H_k (C)
  \tensor_{F[z_j, z_j^{-1}]} F(z_j) = 0 \ .\] This proves that $C
  \tensor_{F[z_j, z_j^{-1}]} F(z_j)$ is acyclic as claimed.

  \medbreak

  To prove the converse suppose that $C \tensor_{F[z_j, z_j^{-1}]}
  F(z_j)$ is acyclic for all~$j$. Fix $k$ and~$j$.  Exactness of
  localisation allows us to rewrite this hypothesis as
  \[H_k(C) \tensor_{F[z_j, z_j^{-1}]} F(z_j) \cong H_k \big(C
  \tensor_{F[z_j, z_j^{-1}]} F(z_j) \big) = 0 \ .\] This implies that
  the image of any element $g \in H_k(C)$ in the tensor product
  $H_k(C) \tensor_{F[z_j, z_j^{-1}]} F(z_j)$ is trivial. As an
  \textsc{abel}ian group said tensor product is a quotient of $H_k(C)
  \tensor_\bZ F(z_j)$ by relations of the form $a \tensor_\bZ (pb) -
  (ap) \tensor_\bZ b$, for $p \in F[z_j,\, z_j^{-1}]$. In other words
  we find finitely many \textsc{Laurent} polynomials $p_i \in F[z_j,\,
  z_j^{-1}]$, and elements $a_i \in H_k(C)$ and $b_i \in F(z_j)$, all
  depending on~$g$, such that
  \[g \tensor_\bZ 1 = \sum_i \big( a_i \tensor_\bZ (p_i b_i) - (a_i
  p_i) \tensor_\bZ b_i \big) \ . \tag{*}\] Since $F(z_j)$ is the
  quotient field of $F[z_j,\, z_j^{-1}]$ we find a \textsc{Laurent}
  polynomial~$p(g)$, depending on~$g$, such that $b_i p(g) \in
  F[z_j,\, z_j^{-1}]$.

  The ring $L$ is \textsc{noether}ian so that $H_k(C)$ is a finitely
  generated $L$-module. Let $g_1, \, g_2,\, \cdots,\, g_m$ be a set of
  generators, and let $q_j = \prod_{\ell = 1}^m p(g_\ell)$ be the
  product of the \textsc{Laurent} polynomials~$p(g)$ constructed above
  from~(*), where $g$ is replaced in turn by the~$g_\ell$. Then, using
  the right $F[z_j,\, z_j^{-1}]$-module structure on~$F(z_j)$,
  equation~(*) for $g = g_\ell$ says that
  \[g_\ell \tensor_\bZ q_j = \sum_i \big( a_i \tensor_\bZ (p_i b_i
  q_j) - (a_i p_i) \tensor_\bZ (b_i q_j) \big) \ .\] By choice
  of~$q_j$ we have $b_i q_j \in F[z_j,\, z_j^{-1}]$ so that
  consequently
  \[g_\ell q_j \tensor_{[z_j, z_j^{-1}]} 1 = g_\ell \tensor_{[z_j,
    z_j^{-1}]} q_j = 0 \quad \text{in } H_k(C) \tensor_{F[z_j,
    z_j^{-1}]} F[z_j,\, z_j^{-1}] \cong H_k (C) \ ,\] that is, $g_\ell
  q_j = 0 \in H_k(C)$. Since the $g_\ell$ generate~$H_k(C)$ this
  implies that multiplication by~$q_j$ annihilates~$H_k(C)$.

  By what we have just shown, $H_k(C)$ is an $L/(q_1,\, q_2,\,
  \cdots,\, q_n)$-module in a natural way. But $H_k(C)$ is finitely
  generated as an $L$-module, hence as a module over the quotient
  $L/(q_1,\, q_2,\, \cdots,\, q_n)$ which in turn is a finite
  dimensional $F$-vector space. It follows that $H_k(C)$ is of finite
  dimension over~$F$ as required.
\end{proof}


\providecommand{\bysame}{\leavevmode\hbox to3em{\hrulefill}\thinspace}
\providecommand{\MR}{\relax\ifhmode\unskip\space\fi MR }
\providecommand{\MRhref}[2]{%
  \href{http://www.ams.org/mathscinet-getitem?mr=#1}{#2}
}
\providecommand{\href}[2]{#2}

\raggedright

\end{document}